\theoremstyle{plain}
\newtheorem{theorem}{Theorem}[section]
\newtheorem{corollary}[theorem]{Corollary}
\newtheorem{proposition}[theorem]{Proposition}
\theoremstyle{definition}
\theoremstyle{remark}
\newcommand{\calF}{\mathcal{F}}
\newcommand{\fraka}{\mathfrak a}
\newcommand{\frakb}{\mathfrak b}
\newcommand{\frakf}{\mathfrak f}
\newcommand{\frakp}{\mathfrak p}
\newcommand{\la}{\lambda}
\newcommand{\La}{\Lambda}
\newcommand{\Z}{\mathbb{Z}}
\newcommand{\A}{\mathbb{A}}
\newcommand{\N}{\mathbb{N}}
\newcommand{\C}{\mathbb{C}}
\newcommand{\K}{\mathbb{K}}
\newcommand{\F}{\mathbb{F}}
\begin{document}

\title[Legendre Drinfeld modules and universal supersingular polynomials]
{Legendre Drinfeld modules and universal supersingular polynomials}

\author{Ahmad El-Guindy}
\address{Current address: Science Program, Texas A\&M University in Qatar, Doha, Qatar}
\address{Permanent address: Department of Mathematics, Faculty of Science, Cairo University, Giza, Egypt 12613}
\email{a.elguindy@gmail.com}

\keywords{Legendre Drinfeld modules, periods, supersingularity}
\subjclass[2010]{11G09, 11F52, 11R58}
\thanks{}
\date{}

\begin{abstract}
We introduce a certain family of Drinfeld modules that we propose as analogues of the Legendre normal form elliptic curves. We exhibit explicit formulas for a certain period of such Drinfeld modules as well as formulas for the supersingular locus in that family,  establishing a connection between these two kinds of formulas. Lastly, we also provide a closed formula for the supersingular polynomial in the $\jmath$-invariant for generic Drinfeld modules. 
\end{abstract}

\maketitle

\section{Introduction}

Drinfeld modules are function field analogues of elliptic curves that were introduced and studied by Drinfeld in the 70's. They have a rich arithmetic theory, and indeed a number of special cases were already studied by Carlitz and Wade (see \cite{Carlitz35, Wade46} for instance) prior to Drinfeld's general definition. Work of many authors (such as Drinfeld, Hayes, Goss, Gekeler, Anderson, Brownawell,   Thakur, Papanikolas and Cornelissen, to name a few) established many striking similarities, and yet a few astounding differences between elliptic curves and Drinfeld modules (particularly in rank 2); the reader may consult \cite{GossBook} and \cite{ThakurBook} for a more elaborate discussion. The comparison of similarities and differences in the statements (and proofs) between corresponding results in the classical and Drinfeld theories provides valuable insights on some common themes in number theory. One motivation of the present paper is to explore the parallels between the two theories in studying the relation between the \emph{periods} and \emph{supersingular polynomial}.

%In his groundbreaking work in 1974, Drinfeld \cite{Drin74} introduced the notion of an \emph{``Elliptic module"}, now known as \emph{Drinfeld modules} over a function field. Work of Drinfeld, Hayes, Goss, Gekeler, and many others (including earlier work by Carlitz and Wade that was carried out with special cases in the thirties and fourties prior to the formal definition by Drinfeld) shows in many ways that Drinfeld modules are the function field analogue of elliptic curves over number fields. (See \cite{GossBook} and \cite{ThakurBook} for a more elaborate discussion). 

We start by reviewing some classical facts from the theory of elliptic curves. Recall that for $\la\in \C\setminus\{0,1\}$, the Legendre normal form elliptic curve $E(\la)$ is given by
\begin{equation}\label{Leg}
E(\la): y^2=x(x-1)(x-\la).
\end{equation}

It is well known (for example, see \cite{Hus}) that $E(\la)$ is isomorphic, as a Riemann surface,  to the complex torus $\C/L_\la$, where $L_\la$ is the lattice $\Z\omega_1(\la)+\Z\omega_2(\la)$, and the \emph{periods} $\omega_1(\la)$ and $\omega_2(\la)$ are given by the integrals

\[
\omega_1(\la)=\int_{-\infty}^0 \frac{dx}{\sqrt{x(x-1)(x-\la)}} \ \ \ \textrm{and} \ \
\omega_2(\la)=\int_1^\infty \frac{dx}{\sqrt{x(x-1)(x-\la)}}.  
\]
These integrals can be expressed in terms of Gauss's hypergeometric function

\begin{equation}
\, _2F_1(x):=\, _2F_1\left(\begin{matrix} \frac{1}{2},&
\frac{1}{2}\\ \ \ & 1\end{matrix}\ ; \ x\right)=\sum_{n=0}^\infty \left(\frac {(\frac 12)_n}{(n!)}\right)^2 x^n,
\end{equation}

where $(a)_n=a\cdot(a+1)\cdots(a+n-1)$.  

More precisely, for $\la \in \C\setminus\{0,1\}$ with $|\la|, |\la-1|<1$, we have

\begin{equation}\label{hyperg}
\omega_1(\la)=i \pi \, _2F_1(1-\la) \ \ \ \textrm{and}\ \ \omega_2(\la)=\pi \, _2F_1(\la).
\end{equation}

Next, let $p\geq 5$ be a prime. Recall that an elliptic curve in characteristic $p$ is said to be \emph{supersingular} if it has no $p$-torsion over $\overline{\F}_p$. There are only finitely many (up to isomorphism) supersingular elliptic curves over $\overline{\F}_p$, and the characteristic $p$ \emph{Hasse invariant} for the Legendre normal form elliptic curves is defined by 
\begin{equation}\label{Hp}
H_p(x):= \prod_{\substack{\la\in \overline{\F}_p \\
E(\la)\ {\text {\rm supersingular}}}}
(x-\la).
\end{equation}
It turns out that $H_p(x)$ is in $\F_p[x]$, and it satisfies (for example, see page 261 of \cite{Hus})
\begin{equation}\label{LegendreSS}
H_p(x) \equiv \,  \sum_{n=0}^{\frac{p-1}{2}}
\left(\frac{(\frac{1}{2})_n
}{n!}\right)^2x^n \pmod p.
\end{equation}
Notice that the Hasse invariant is given as a ``truncation" of the expression for the real period. Similar results were obtained for other families of elliptic curves where the supersingular locus corresponds to zeros of a truncation of a period (see \cite{ElgOno} for instance). 

It is natural to investigate whether a similar phenomenon exists in the theory of Drinfeld modules. We start by recalling the basic definition and fixing some notation (for more details the reader can consult \cite{GossBook} for instance). Let $q$ be a power of a prime. Consider the polynomial ring $\A:=\F_q[T]$, and let $\K$ denote the fraction field of $\A$. Consider the unique valuation on $\K$ defined by
\[
v(T)=-1,
\]
which is the valuation at the ``infinite prime'' of the ring $\A$. Let $\K_\infty$ denote the completion of $\K$ with respect to $v$, and let $\C_\infty$ denote the completion of an algebraic closure of  $\K_\infty$. It is well known that $v$ has a unique extension to $\C_\infty$ that we still denote by $v$, and that $\C_\infty$ is a complete algebraically closed field. 
A field $L$ is called an \emph{$\A$-field} if there is a nonzero homomorphism $\imath:{\A}\to L$. Examples of such fields are  extensions of either $\K$ or $\A/\mathfrak{p}$, where $\mathfrak{p}$ is a nonzero prime of $\A$. For simplicity we will write $a$ in place of $\imath(a)$ when the context is clear.  Such a field has a \emph{Frobenius homomorphism}
\[
\begin{split}
\tau:L&\to L\\
z&\mapsto z^q,
\end{split}
\]
and we can consider the ring $L\{\tau\}$ of polynomials in $\tau$ under addition and composition; thus $\tau \ell=\ell^q\tau$ for any $\ell \in L$. A \emph{Drinfeld module of rank $2$ over $L$} is an $\F_q$-linear ring homomorphism $\phi:\A\to L\{\tau\}$ given by
\begin{equation}\label{phi}
\phi_T=T+g\tau+\Delta \tau^2, \, \, \, \Delta\neq 0.
\end{equation}
It then follows that the constant term of $\phi_a$ is $a$ for all $a \in \A$ and that the degree of $\phi_a$ in $\tau$ is $2\deg_T(a)$. Also, the $\jmath$-invariant of $\phi$ is defined by $\jmath(\phi)=\frac{g^{q+1}}{\Delta}$. Two Drinfeld modules are isomorphic over $\overline{L}$ (a fixed algebraic closure of $L$) if and only if they have the same $\jmath$-invariant.

The set $\ker(\phi_a):=\{x\in \overline{L}:\phi_a(x)=0\}$ constitutes the \emph{$a$-torsion points} of $\phi$; it is in fact an $\F_q$-vector subspace of $\overline{L}$. If $L$ is an extension of $\A$ then $\dim_{\F_q}(\ker(\phi_a))=2\deg(a)$, whereas if $\frakp$ is a nonzero prime of $\A$ and $L$ is an extension of $\F_\frakp:=\A/\frakp$, then either $\dim_{\F_q}(\ker(\phi_\frakp))=\deg \frakp$, in which case $\phi$ is called \emph{ordinary at $\frakp$}, or else  $\ker(\phi_\frakp)=0$, in which case $\phi$ is called \emph{supersingular at $\frakp$}. Supersingularity of $\phi$ depends only on its $\overline{\F}_\frakp$-ismorphism class, and hence only on $\jmath(\phi)\in \overline{\F}_\frakp$. 

We now give a brief outline of our results. We shall consider the special family of Drinfeld modules given by
\begin{equation}\label{LegDrin}
\phi_T=T-(T+\Delta)\tau+\Delta\tau^2,
\end{equation}
which we propose as an analogue of the Legendre form elliptic curves. In section \ref{period} we recall results from \cite{ElgPap} that provide a general analytic expression of a period of a Drinfeld module as a certain infinite series. We then specialize the computation to the Legendre Drinfeld modules to obtain a closed form combinatorial expression for the coefficients of said series. In section \ref{tower} we relate these coefficients to certain polynomials $p_n(x)$ that were recently studied by Bassa and Beelen in connection with the Drinfeld modular tower $X_0(T^n)$ over $\F_\frakp$ for $\frakp\neq T$, as well as its splitting and supersingular locus. More specifically, it is shown in \cite[Corollary 20]{BasBee} that if $\frakp\in \A$ is a prime of degree $n$, then a Drinfeld module $\phi$ as in \eqref{LegDrin} is supersingular at $\frakp$ if and only if $p_n\left(\frac{-\Delta}{T^q}\right)\equiv 0 \pmod \frakp$. The definition given for $p_n(x)$ in \cite{BasBee} is recursive. Combining the results from that paper with those of section \ref{period} we obtain an explicit closed form formula for those polynomials (which could be viewed as parallel to, albeit more complicated than, \eqref{Hp}), as well as a positive answer to the existence of a connection between formulas for periods (over $\C_\infty$) and polynomials encoding supersingularity  within certain families of Drinfeld modules.

It should be noted that any Drinfeld module is isomorphic (over an appropriate extension of its field of definition)  to one of the form \eqref{LegDrin}. In fact, since for such $\phi$ we have $\jmath(\phi)=\frac{(T+\Delta)^{q+1}}{\Delta}$, we see that for any given $\jmath\neq 0$ there are exactly $q+1$ distinct values of $\Delta$ which yield that $\jmath$. It is thus natural to also seek a description of the polynomial whose roots are the supersingular $\jmath$-invariants. (We note that the situation is similar to the classical case, for instance a generic $\jmath$-invariant can correspond to 6 different  $\lambda$-invariants in the family \eqref{Leg}.) 
It is well-known that the set $U_\frakp\subset \overline{\F}_\frakp $ of supersingular $\jmath$-invariants is finite, and in fact is a  $\textrm{Gal}(\F_\frakp^{(2)}/\F_\frakp)$ subset of $\F_\frakp^{(2)}$; the unique degree 2 extension of $\F_\frakp$ in $\overline{\F}_\frakp$. Furthermore 
we have $0\in U_\frakp$ if and only if $\deg(\frakp)$ is odd. It is customary to set
\[
ss_\frakp(x):=\prod_{\jmath\in U_\frakp\setminus\{0\}}(x-\jmath)\in \F_\frakp[x].
\]
Furthermore, the cardinality of $U_\frakp$ depends only on $\deg(\frakp)$ (for fixed $q$). Using the Chinese Remainder Theorem, we may consider polynomials $P_n(x)\in \A[x]$ whose reduction modulo any monic prime $\frakp$ of degree $n$ is $ss_\frakp(x)$ modulo $\frakp$. The polynomials $ss_\frakp(x)$ and $P_n(x)$ were studied by many authors. For instance, Cornelissen \cite{Cor99a,Cor99b} provided recurrences and congruences for $P_n(x)$, while Gekeler \cite{Gek11} recently proved certain congruences for $ss_\frakp$ in terms of companion polynomials to certain Eisenstein and so-called para-Eisenstein series.  We shall give closed form explicit formulas for $ss_\frakp$ modulo $\frakp$ (cf. Theorem \ref{ssformula}) that rely on relatively simple combinatorial objects called \emph{shadowed partitions} that were introduced by Papanikolas and the author in \cite{ElgPap}. These formulas are ``universal"  in that they depend only on $\deg(\frakp)$ and $q$.

%The paper is organized as follows. In section \ref{period} we recall results from \cite{ElPa} on expressing periods of Drinfeld modules as a series, and apply those results to the Legendre Drinfeld family to obtain a combinatorial expression for the coefficients of these series. In section \ref{tower}, we relate those coefficients to the splitting and supersingular (at a prime $\frakp$) loci of the Drinfeld modular tower $x_0(T^n)$ over $A/\frakp$, as studied by Bassa and Beelen in \cite{BasBee}. Finally, in section \ref{sspol} we provide explicit formulas for the supersingular polynomial $ss_\frakp$.  

\section{Periods in the Legendre Drinfeld family}\label{period}

Let $\Lambda\subset \C_\infty$ be a rank 2 $\A$-lattice. Attached to $\Lambda$ is the \emph{lattice exponential function} defined for $z\in \C_\infty$ by
\[
e_\Lambda(z):=z\prod_{0\neq \lambda \in \Lambda}\left(1-\frac{z}{\lambda}\right).
\]
We also have that $\Lambda/T\Lambda$ is a vector space of dimension 2 over $\F_q$, and it follows that $\prod_{\la \in \Lambda/T\Lambda}\left(x-e_\Lambda\left(\frac{\lambda}{T}\right)\right)$ is a polynomial of the form $x+g(\Lambda)x^q+\Delta(\Lambda) x^{q^2}$. Let $\phi^{\Lambda}$ be the Drinfeld module corresponding to $\Lambda$, which is given by $\phi_T^{\Lambda}:=T +g(\Lambda)\tau+\Delta(\Lambda)\tau^2$. It satisfies the functional equation 
\begin{equation}\label{fe}
e_\Lambda(Tz)=\phi^{\Lambda}_T(e_\Lambda(z)). 
\end{equation}
In analogy with the classical theory of elliptic curves, every rank $2$ Drinfeld module arises in this way, and the corresponding lattice $\Lambda_\phi$ of a given $\phi$ is called the \emph{period lattice} of $\phi$. We shall also denote the function satisfying \eqref{fe} for a given $\phi$ by $e_\phi$, and denote the formal inverse (which converges for $z$ sufficiently small) by $\log_\phi$.

We now recall some results of Papanikolas and the author from \cite{ElgPap} outlining a method of computing certain elements of $\Lambda_\phi$ from the knowledge of $\phi_T=T +g\tau+\Delta \tau^2$. For any such $\phi$, consider the set of $T$-torsion points given by $\ker(\phi_T)$; it is a $2$-dimensional $\F_q$ subspace of $\C_\infty$. Fix $0\neq\delta\in \C_\infty$, and set
\begin{equation}\label{fdel}
\mathcal{F_\delta}:=\{\textrm{All rank 2 Drinfeld modules $\phi$ over $\C_\infty$ such that }\F_q\delta\subset \ker(\phi_T)\}.
\end{equation}
We note that for $b\in \F_q^\times$, $\calF_{b\delta}=\calF_{\delta}$. Considering equal sets as equivalent, we see that any $\phi$ belongs to exactly two distinct sets $\calF_\delta$.
In order to guarantee convergence of certain series below, we shall consider the subfamily $\mathcal{F}_\delta^\star$ of $\mathcal{F}_\delta$ defined by
\begin{equation}\label{fstar}
\mathcal{F}_\delta^\star:=\left\{\phi \in \mathcal{F}_\delta: v(\jmath)<-q \textrm { and } v(\delta)=\frac{v(g)-v(\Delta)}{q^2-q}\right\}.
\end{equation}
We have the following analytic expression for certain periods of Drinfeld modules in $\mathcal{F}_\delta^\star$.

\begin{theorem}\textup{\cite[Theorem 6.3]{ElgPap}} \label{6.3}
Let $\phi \in \mathcal{F}_\delta^\star$ be given. Fix a choice of a $(q-1)$-st root of $\frac{T}{\Delta}$ and write
\[
c=\delta^{-1}\left(\frac{T}{\Delta}\right)^{\frac{1}{q-1}}.
\]
 Let $\beta_j$ be the coefficients of $\log_\phi(z)=\sum_{j=0}^\infty\beta_j z^{q^j}$. Set
\begin{equation}\label{a}
\begin{split}
\fraka_\delta(n)&:=T\sum_{j=0}^n \beta_j\delta^{q^j}, \textrm{ and }\\
\frakf(z)&:=\sum_{n=0}^\infty \fraka_\delta(n)z^{q^n}.
\end{split}
\end{equation}
Then the series $\frakf$ converges for $z=\delta^{-q}c$, and $\frakf(\delta^{-q}c)$ is a period of $\La_\phi$ with maximal valuation.

%\item  If $-q>v(\jmath)> -q^2$ then $\frakf$ converges if and only if $v(z)>0$, and if $v(\jmath)<-q^2$ then $\frakf$ converges if and only if
%\begin{equation}
%v(z)>\frac{-(v(\jmath)+q^2)}{q^2-q}>0.
%\end{equation}
%If $v(\jmath)=-q^2$, then $\frakf$ converges at least for $v(z)>0$.
%Furthermore for $n\geq 0$ we have
%\begin{equation}
%\fraka_\delta(n)=(T\delta)^{q^n}\beta_n-(\Delta\delta^{q^2})^{q^{n-1}}\beta_{n-1},
%\end{equation}
%and in the range $v(z)>\frac{-(q+v(\jmath))}{q^2-q}$, $\frakf$ has the representation
%\begin{equation}\label{frakf}
%\frakf(z)=\log_\phi(T\delta z)-\log_\phi(\Delta\delta^{q^2}z^q).
%\end{equation}
\end{theorem}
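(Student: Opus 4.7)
The plan is to realize the period $\frakf(\delta^{-q}c)$ as a limit of $T$-scaled logarithms of iterated $\phi_T$-preimages of $\delta$, and then to reorganize the resulting double sum so that it is recognizable in closed form. Starting from $\delta_0:=\delta\in\ker(\phi_T)$, I would construct inductively a sequence $(\delta_n)_{n\ge 0}$ in $\C_\infty$ with $\phi_T(\delta_{n+1})=\delta_n$. At each stage, the equation $\Delta x^{q^2}+gx^q+Tx-\delta_n=0$ has $q^2$ solutions, and the hypotheses defining $\calF_\delta^\star$, namely $v(\jmath)<-q$ together with $v(\delta)=(v(g)-v(\Delta))/(q^2-q)$, guarantee via the Newton polygon that there is a well-defined branch of solutions of strictly larger valuation than $v(\delta_n)$; fixing the $(q-1)$-st root of $T/\Delta$ that enters $c$ selects a specific element of that branch. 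Iterating produces a canonical sequence $(\delta_n)$ with $v(\delta_n)\to\infty$.

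Once $v(\delta_n)$ exceeds the radius of convergence of $\log_\phi$, the series $\log_\phi(\delta_n)=\sum_{j\ge 0}\beta_j\delta_n^{q^j}$ converges in $\C_\infty$, and the functional identity $\log_\phi\circ\phi_T=T\cdot\log_\phi$, valid on that disk, shows that $T^{n+1}\log_\phi(\delta_n)$ stabilizes for $n$ sufficiently large; call this common value $\omega$. Since $e_\phi\circ\log_\phi$ is the identity near $0$ and $e_\phi(T\cdot)=\phi_T\circ e_\phi$,
\[
e_\phi(\omega)=\phi_{T^{n+1}}(\delta_n)=\phi_T(\delta)=0,
\]
so $\omega\in\La_\phi$ is a period of the expected type.

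The central obstacle is matching $\omega$ with the closed-form expression $\frakf(\delta^{-q}c)$. For this I would refine the Newton-polygon analysis to an explicit asymptotic expansion of $\delta_n$ as a convergent series in $\delta$, $T^{-1}$, and $c$, and then substitute into $T^{n+1}\sum_j\beta_j\delta_n^{q^j}$. Regrouping the resulting double sum by the total $q$-power of $\delta$ in each term should produce a telescoping inner summation, once one applies the recursion on the $\beta_j$ coming from $\log_\phi\circ\phi_T=T\log_\phi$ together with the relation $T\delta=-g\delta^q-\Delta\delta^{q^2}$ (which lets higher $q$-powers of $\delta$ be rewritten in terms of $\delta$ and $\delta^q$). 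The inner partial sums should collapse into precisely $\fraka_\delta(n)$, while the outer multipliers assemble into $(\delta^{-q}c)^{q^n}$, yielding $\frakf(\delta^{-q}c)$; convergence of the latter at $z=\delta^{-q}c$ falls out of the same valuation bookkeeping, since the condition $v(\jmath)<-q$ is precisely what forces $v(\delta^{-q}c)>0$. Maximality of $v(\omega)$ then follows from the construction: at each stage the Newton-polygon selection picks out the branch of $\phi_T$-preimages of largest valuation, so passing through $T^{n+1}\log_\phi$ and taking the limit produces a period representative of maximal valuation in $\La_\phi$.
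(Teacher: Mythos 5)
This statement is imported verbatim from \cite[Theorem 6.3]{ElgPap}; the present paper gives no proof of it, so your attempt can only be measured against the source. Your skeleton is the right one and, as far as I can tell, matches the strategy of \cite{ElgPap}: build a $T$-division tower $\phi_T(\delta_{n+1})=\delta_n$ above $\delta$ along the maximal-valuation branch of the Newton polygon of $\Delta x^{q^2}+gx^q+Tx-\delta_n$, observe that $T^{n+1}\log_\phi(\delta_n)$ stabilizes once $\delta_n$ enters the disk of convergence, and note that the limit $\omega$ satisfies $e_\phi(\omega)=\phi_T(\delta)=0$, hence lies in $\Lambda_\phi$. Your check that $v(\delta^{-q}c)>0$ is exactly equivalent to $v(\jmath)<-q$ is also correct and is the right way to see why $\mathcal{F}_\delta^\star$ is the natural domain of validity.

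The problem is that everything after ``the central obstacle'' is aspiration rather than proof, and that obstacle \emph{is} the theorem. You never derive the expansion of $\delta_n$ (not even its leading term $\delta/T^{n}$ up to units, let alone the full convergent series in $\delta$, $T^{-1}$, $c$ whose existence you assert), you never perform the substitution into $T^{n+1}\sum_j\beta_j\delta_n^{q^j}$, and the claims that the regrouped double sum ``should collapse'' into $\fraka_\delta(m)$ with outer factors ``assembling'' into $(\delta^{-q}c)^{q^m}$ are exactly the identity to be proved; nothing in your argument explains why the $(q-1)$-st root of $T/\Delta$, rather than some other algebraic combination of $g,\Delta,\delta$, appears, nor why the inner sums truncate at $j\le m$ rather than producing cross terms. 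The rearrangement of the double series also needs the valuation estimates on $\beta_j$ (equivalently on $\fraka_\delta(n)$) that you defer to ``bookkeeping''; positivity of $v(\delta^{-q}c)$ alone does not give convergence of $\frakf$ at that point. Finally, the maximality claim does not follow from choosing the largest-valuation preimage at each stage: that produces one particular lattice element, and one must still compare its valuation with that of every element of $\Lambda_\phi$, in particular with periods built from the other $\F_q$-line of $\ker(\phi_T)$ (the $q-1$ torsion points on the other Newton-polygon slope) and with all $\A$-linear combinations. As it stands the write-up establishes that some period arises as $\lim T^{n+1}\log_\phi(\delta_n)$, but neither its closed form nor its extremality.
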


%\begin{remark}
%Note that since $v(\delta^{-q}c)=\frac{-(v(\jmath)+q)}{q^2-q}$, we can not use \eqref{frakf} to evaluate   $\la=\frakf(\delta^{-q}c)$. Instead we can only use \eqref{a} for that evaluation.
%\end{remark}

To motivate our definition of \emph{Legendre Drinfeld modules} we note that $\ker(\phi_T)$ can be viewed as the function field analogue of the $2$-torsion of an elliptic curve $E$. The Legendre family of elliptic curve corresponds to specifying the points $(x,y)\in\{(0,0), (1,0)\}$ to be nontrivial $2$ torsion points of curves in the family, leaving one free parameter corresponding to the third nontrivial $2$-torsion points. The function field analogue we propose is to fix a one-dimensional  subspace of $\ker(\phi_T)$ to be the simplest possible, namely $\F_q$ itself, again leaving ``one free parameter" determined by any $T$-torsion point not in $\F_q$.  This correspond to taking $\delta=1$ (or any element of $\F_q^\times$) in \eqref{fdel}, and we shall thus consider $\phi \in \calF_1$,  and set $\fraka_n=\fraka_1(n)$. Any Drinfeld module $\phi\in \calF_1$ is given by 
\[
\phi_T=T-(\Delta+T)\tau+\Delta\tau^2, \, \, \, \Delta\neq 0.
\]
Writing the associated logarithmic function as
\[
\log_\phi(z)=\sum_{i=0}^\infty \beta_i z^{q^i},
\]
we see that
\[
\fraka_n:=T\sum_{i=0}^n \beta_i.
\]
Since the function $\log_\phi$ satisfies the functional equation $T\log_\phi(z)=\log_\phi(\phi_T(z))$, it follows easily that the coefficients $\beta_n$ satisfy the recursion
\[
T\beta_n=T^{q^n}\beta_n+(-T-\Delta)^{q^{n-1}}\beta_{n-1}+\Delta^{q^{n-2}}\beta_{n-2},
\]
(we set $\beta_n=0$ for $n<0$). Consequently, the coefficients $\fraka_n$ satisfy
\[
\begin{split}
\fraka_n&=T^{q^n}\beta_n-\Delta^{q^{n-1}}\beta_{n-1}\\
&=T^{q^n-1}(\fraka_n-\fraka_{n-1})-\Delta^{q^{n-1}}T^{-1}(\fraka_{n-1}-\fraka_{n-2})\\
\end{split}
\]
multiplying by $T$ and re-arranging we get (setting $\fraka_{n}=0$ for $n<0$)
\begin{equation}\label{arecur}
-[n]\fraka_{n}=-\fraka_{n-1}\left(T^{q^{n}}+\Delta^{q^{n-1}}\right)+\Delta^{q^{n-1}}\fraka_{n-2},
\end{equation}
where $[n]:=T^{q^n}-T$. For future use we also set
\begin{equation}\label{dn}
\begin{split}
D_n&:=[n][n-1]^q[n-2]^{q^2}\cdots [1]^{q^{n-1}},\quad D_0:=1,\\
L_n&:= [n][n-1]\cdots[2][1],\quad L_0:=1.
\end{split}
\end{equation}
We will find it convenient to work with the rescaling of $\fraka_n$ given by
\[
\frakb_n:= \frac{L_n\fraka_n}{T^{1+q+\dots+q^n}}.
\]
multiplying \eqref{arecur} by $\frac{-L_{n-1}}{T^{1+q+\dots+q^{n}}}$, and setting (for typographical convenience)  $D:=\frac{\Delta}{T^q}$ we get
\begin{equation}\label{brecur}
\frakb_{n}=\left(1+D^{q^{n-1}}\right)\frakb_{n-1}+D^{q^{n-1}}(T^{1-q^{n-1}}-1)\frakb_{n-2},
\end{equation}
with $\frakb_{-1}=0, \frakb_0=1$. The first few examples are
\[
\begin{split}
&\frakb_0=1,\\
&\frakb_1=D+1,\\
&\frakb_2=D^{q+1}+D^qT^{1-q}+D+1,\\
&\frakb_3=D^{q^2+q+1}+D^{q^2+q}T^{1-q}+D^{q^2+1}T^{1-q^2}+D^{q^2}T^{1-q^2}+D^{q+1}+D^qT^{1-q}+D+1,\\
\end{split}
\]
We shall give an explicit combinatorial formula for $\frakb_n$, for which we need to introduce a little bit of notation first. For $S$ any finite subset of $\N=\{0,1,\dots\}$, define the subset $M(S)$ by
\begin{equation}
M(S):=\{i \in S: i-1\notin S\}.
\end{equation}
Furthermore, we associate to $S$ an integer ``\emph{weight}" $w(S)$ and a monomial $m(S)\in \A$ defined by 
\[
\begin{split}
w(S)&:=w(S;q):=\sum_{i\in S}q^i,\\
m(S)&:=m(S;q):=\prod_{i\in M(S)}T^{q^i-1}=T^{w(M(S))-|M(S)|},
\end{split}
\]
where the absolute value is used to denote the cardinality of the given set. Note that $m(\emptyset)=1$.
Finally, for $n>0$ we write $\N(n):=\{0,1,\dots, n-1\}$, $\N(0):=\emptyset$.

\begin{theorem}\label{bnthm}
For $n\geq 0$, the normalized period coefficient $\frakb_n$ is given by
\begin{equation}\label{bnformula}
\frakb_n=\sum_{S\subset \N(n)}\frac{D^{w(S)}}{m(S)}
\end{equation}
\end{theorem}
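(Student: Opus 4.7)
The plan is to induct on $n$, verifying that the right-hand side of \eqref{bnformula} satisfies the recurrence \eqref{brecur} together with the initial data $\frakb_{-1}=0$ and $\frakb_0=1$. Writing $B_n$ for the claimed combinatorial sum, the base cases $B_0=1$ and $B_1=1+D$ will follow immediately from the definitions of $w$ and $m$: only $S=\emptyset$ contributes when $n=0$, and for $n=1$ the subsets of $\{0\}$ give $1+D$.

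For the inductive step I will partition the subsets $S\subset\N(n)$ according to whether $n-1\in S$. Subsets avoiding $n-1$ are precisely the subsets of $\N(n-1)$ and contribute $B_{n-1}$. For $S$ containing $n-1$, I write $S=S'\sqcup\{n-1\}$ with $S'\subset\N(n-1)$; then $w(S)=w(S')+q^{n-1}$, while the value of $m(S)$ bifurcates according to whether $n-2\in S'$. If $n-2\notin S'$, then $n-1\in M(S)$ and membership of each $i\le n-2$ in $M(S)$ coincides with membership in $M(S')$; hence $M(S)=M(S')\sqcup\{n-1\}$ and $m(S)=m(S')\,T^{q^{n-1}-1}$. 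If instead $n-2\in S'$, then $n-1\notin M(S)$ and still $M(S)\cap\N(n-1)=M(S')$; hence $M(S)=M(S')$ and $m(S)=m(S')$.

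The final step is the observation that subsets of $\N(n-1)$ avoiding $n-2$ are exactly the subsets of $\N(n-2)$ (so contribute $B_{n-2}$), while subsets of $\N(n-1)$ containing $n-2$ contribute $B_{n-1}-B_{n-2}$. Summing the three pieces yields
\[
B_n \;=\; B_{n-1} \;+\; D^{q^{n-1}}T^{1-q^{n-1}}\,B_{n-2} \;+\; D^{q^{n-1}}\bigl(B_{n-1}-B_{n-2}\bigr),
\]
which rearranges to $(1+D^{q^{n-1}})B_{n-1} + D^{q^{n-1}}(T^{1-q^{n-1}}-1)B_{n-2}$, i.e., exactly \eqref{brecur}. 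The main obstacle will be the careful bookkeeping of how the set $M(S)$ of ``leftmost'' elements transforms when the top element $n-1$ is adjoined to a subset $S'$: whether $n-1$ enters $M(S)$ is controlled by the presence or absence of $n-2$ in $S'$, and it is precisely this dichotomy that produces the two distinct terms of the recurrence \eqref{brecur}. Once this case analysis is in place, the rest of the argument is purely bookkeeping.
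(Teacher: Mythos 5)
Your proposal is correct and follows essentially the same route as the paper: induction on $n$, with the three-way case split on whether the top element (and, if so, the next one down) belongs to $S$, and the key observation that $m(S)$ acquires the factor $T^{q^{n-1}-1}$ exactly when $n-1\in S$ but $n-2\notin S$. The only difference is cosmetic — you verify that the combinatorial sum satisfies the recurrence \eqref{brecur}, while the paper expands the recurrence and regroups it into the combinatorial sum.
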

\begin{proof}
We proceed by induction on $n$. The formula clearly holds for $n=0,1$. Assume it holds for all values up to $n$, and note that $S\subset \N(n+1)$ with $n\notin S$ is equivalent to $ S\subset \N(n)$. It follows from \eqref{brecur} that
\[
\begin{split}
\frakb_{n+1}&=D^{q^n}(\frakb_n+(T^{1-q^n}-1)\frakb_{n-1})+\frakb_n\\
&=\sum_{S\subset \N(n+1), n\in S}\frac{D^{w(S)}}{m(S\setminus\{n\})}
+\sum_{S\subset \N(n+1), n\in S, n-1\notin S}\left(\frac{D^{w(S)}}{m(S)}- \frac{D^{w(S)}}{m(S\setminus\{n\})}\right)\\
&+\sum_{S\subset \N(n+1), n\notin S}\frac{D^{w(S)}}{m(S)}\\
&=\sum_{S\subset \N(n+1), n\in S, n-1\in S}\frac{D^{w(S)}}{m(S)}+\sum_{S\subset \N(n+1), n\in S, n-1\notin S} \frac{D^{w(S)}}{m(S)}+\sum_{S\subset \N(n+1), n\notin S}\frac{D^{w(S)}}{m(S)},\\
\end{split}
\]
where the last equality follows since if $\{n, n-1\}\subset S$, then $m(S)=m(S\setminus\{n\})$. The formula now follows immediately as the sets in the three summands in the last equation form a partition of $\N(n+1)$.
\end{proof}

\begin{corollary}
With the above notation, the coefficients of the series $\frakf(z)$ in Theorem \ref{6.3} for a Legendre Drinfeld module $\phi \in \calF_1$ are given by
\begin{equation}
\fraka_n= \frac{T^{1+q+q^2+\dots+q^n}}{L_n}\sum_{S\subset \N(n)}\frac{D^{w(S)}}{m(S)}.
\end{equation}
\end{corollary}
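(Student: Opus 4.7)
The plan is essentially to unwind a single definition. For a Legendre Drinfeld module $\phi \in \calF_1$ we are taking $\delta = 1$, so the quantity $\fraka_\delta(n)$ from \eqref{a} reduces to the $\fraka_n$ on which Theorem~\ref{bnthm} has already done all of the combinatorial work (via the rescaled coefficients $\frakb_n$). The corollary then follows by a one-line substitution.

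Step one: invert the definition $\frakb_n := L_n \fraka_n / T^{1+q+\cdots+q^n}$ to write
\[
\fraka_n = \frac{T^{1+q+\cdots+q^n}}{L_n}\,\frakb_n.
\]
Step two: substitute the closed form $\frakb_n = \sum_{S \subset \N(n)} D^{w(S)}/m(S)$ provided by Theorem~\ref{bnthm}. The resulting expression is precisely the claim, so no further manipulation is needed.

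There is no real obstacle here; all of the genuine combinatorics has been carried out in the proof of Theorem~\ref{bnthm}. The only point worth highlighting is that the formula exhibits $\fraka_n$ as a rational function in $T$ and $\Delta$ with the entirely explicit denominator $L_n \cdot m(S)$, and since each $m(S)$ is a power of $T$, the only non-$T$ poles come from the factors $[k] = T^{q^k}-T$ hidden in $L_n$. This transparent pole structure is what will allow a clean comparison, modulo a prime $\frakp \neq T$, between $\fraka_n$ and the Bassa--Beelen polynomials $p_n$ in the next section.
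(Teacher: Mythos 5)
Your proof is correct and matches the paper's (implicit) argument exactly: the corollary is just the definition $\frakb_n = L_n\fraka_n/T^{1+q+\cdots+q^n}$ inverted and combined with the closed form for $\frakb_n$ from Theorem \ref{bnthm}. Nothing further is needed.
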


\section{Connections to the modular tower $X_0(T^n)$ and supersingular Drinfeld modules}\label{tower}

An important result in the theory of curves over finite fields is the Drinfeld-Vladut bound \cite{DrinVla}, which provides an asymptotic bound on the number of rational points on such curves as their genera grow to infinity. Several authors (see for example \cite{GarStic, Elk, BasBee} and the references within) addressed the nontrivial task of providing examples of curves of large genera over a finite field which are \emph{optimal} in the sense that they asymptotically attain the Drinfeld-Vladut bound. Explicit equations and computations on such curves have practical applications as they yield excellent linear error correcting Goppa codes \cite{TsfVla}.

One important source of such examples are the Drinfeld modular curves $X_0(T^n)$.  Elaborating on ideas of Elkies \cite{Elk}, Bassa and Beelen \cite{BasBee} recently provided recursive equations over $\F_q(T)$ for the tower $X_0(T^n))_{n\geq 2}$ whose reduction at various primes (different from $T$) are optimal over a quadratic extension of the base field. 
In the course of their study, they  introduced a sequence of polynomials $(p_n(x))_{n\geq -1}$ given by  $p_{-1}=0$, $p_0(x)=1$, and
\begin{equation}\label{precur}
p_{n+1}(x)=(x^{q^n}-1)p_n(x)+(1-T^{1-q^n})x^{q^n}p_{n-1}(x).
\end{equation}
As examples we have
\[
\begin{split}
p_1(x)&=x-1,\\
p_2(x)&=x^{q+1}-T^{1-q}x^q-x+1,\\
p_3(x)&=x^{q^2+q+1}-T^{1-q}x^{q^2+q}-T^{1-q^2}x^{q^2+1}+T^{1-q^2}x^{q^2}-x^{q+1}+T^{1-q}x^q+x-1,\\
p_4(x)&=x^{q^3+q^2+q+1}-\frac{x^{q^3+q^2+q}}{T^{q-1}}-\frac{x^{q^3+q^2+1}}{T^{q^2-1}}+\frac{x^{q^3+q^2}}{T^{q^2-1}}-\frac{x^{q^3+q+1}}{T^{q^3-1}}+\frac{x^{q^3+q}}{T^{q^3+q-2}}+\frac{x^{q^3+1}}{T^{q^3-1}}-\frac{x^{q^3}}{T^{q^3-1}}\\
&-x^{q^2+q+1}+\frac{x^{q^2+q}}{T^{q-1}}+\frac{x^{q^2+1}}{T^{q^2-1}}-\frac{x^{q^2}}{T^{q^2-1}}+x^{q+1}-\frac{x^q}{T^{q-1}}-x+1.\\
\end{split}
\]
These polynomials play an important role in determining the splitting locus of a certain Drinfeld modular tower (see  \cite[Section 4]{BasBee} for a precise statement and a few additional properties satisfied by these polynomials). Furthermore, they also proved the following result connecting these polynomials to certain supersingular Drinfeld modules.
 
\begin{proposition}\label{cor20}\textup {(Bassa and Beelen \cite[Corollary 20]{BasBee})}

The Drinfeld module given by 
\begin{equation}\label{bb}
\psi_T=T+(\Delta+T)\tau+\Delta\tau^2
\end{equation}
is supersingular at a prime $\frakp \in \A$ with $\deg(\frakp)=n$ if and only if 
\begin{equation}\label{ssp}
p_n\left(\frac{-\Delta}{T^q}\right) \equiv 0 \pmod \frakp.
\end{equation}
\end{proposition}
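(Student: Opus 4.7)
The plan is to match the $\tau^n$-coefficient of $\phi_\frakp$ with $p_n(-\Delta/T^q)$ via a recursion on the iterated operators $\phi_{T^k}$.

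First, I would pass from $\psi$ to the Legendre Drinfeld module $\phi_T = T - (\Delta+T)\tau + \Delta\tau^2$ from Section \ref{period}: conjugation by a scalar $c$ with $c^{q-1} = -1$ (such $c$ exists in $\F_{q^2}$ for odd $q$, while $c = 1$ suffices in characteristic 2) intertwines $\phi_T$ and $\psi_T$. Consequently they have the same $\jmath$-invariant and supersingular locus, so it suffices to prove the claim for $\phi$.

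Next, I would invoke the standard supersingularity criterion: a rank 2 Drinfeld module over an $\F_\frakp$-algebra is supersingular at $\frakp$ if and only if the coefficient of $\tau^n$ in $\phi_\frakp$ vanishes in $\F_\frakp$. In rank 2, the height of $\phi$ at $\frakp$ is either $1$ (ordinary, $\tau$-valuation of $\bar\phi_\frakp$ equal to $n$) or $2$ (supersingular, $\tau$-valuation equal to $2n$), and these two cases are distinguished precisely by the $\tau^n$-coefficient.

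The heart of the argument is then a recursive computation of that coefficient. Writing $\phi_{T^k} = \sum_i c^{(k)}_i \tau^i$, the identity $\phi_{T^{k+1}} = \phi_T \cdot \phi_{T^k}$ yields
\[
c^{(k+1)}_i = T\,c^{(k)}_i - (T+\Delta)\bigl(c^{(k)}_{i-1}\bigr)^q + \Delta\bigl(c^{(k)}_{i-2}\bigr)^{q^2}.
\]
Tracking the triple $\bigl(c^{(k)}_{k-1},\,c^{(k)}_k,\,c^{(k)}_{k+1}\bigr)$ simultaneously and applying an appropriate normalization by powers of $T$, I would extract a three-term linear recursion satisfied by a rescaled version of $c^{(n)}_n$. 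Matching this recursion with \eqref{precur}, together with the base cases $n = 0, 1$, identifies $c^{(n)}_n$ up to a unit mod $\frakp$ with $p_n(-\Delta/T^q) \bmod \frakp$, which gives the desired equivalence via the criterion of the previous paragraph.

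The main obstacle is the bookkeeping in the third step: the coupled system among $c^{(k)}_{k \pm 1}$ and $c^{(k)}_k$ must be closed into a clean scalar three-term recurrence whose coefficients reproduce those of \eqref{precur}, with the Frobenius twists and the $T^{1-q^n}$-type factors emerging in exactly the right form. This is essentially the technical content of Bassa and Beelen's argument in \cite[Section 4]{BasBee}, where an alternative and more conceptual route is taken through the Drinfeld modular tower $X_0(T^n)$ and an Eichler--Shimura-type congruence identifying supersingular points with fixed points of Frobenius.
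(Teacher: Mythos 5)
First, a remark on context: the paper does not prove this proposition at all — it is imported verbatim from Bassa and Beelen \cite[Corollary 20]{BasBee} — so your attempt is being measured against the literature rather than against an argument in the text. Your first two steps are fine: the twist by $c$ with $c^{q-1}=-1$ correctly identifies the supersingular loci of $\psi$ and of the Legendre module $\phi$ (this is exactly how the paper deduces part (iii) of its Theorem 3.2), and the criterion ``supersingular at $\frakp$ of degree $n$ iff the coefficient of $\tau^{n}$ in $\phi_\frakp$ vanishes'' is the standard rank-$2$ height dichotomy.

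The gap is in your third step: you compute the $\tau^n$-coefficient of $\phi_{T^n}$, but the criterion requires the $\tau^n$-coefficient of $\phi_{\frakp}$, and these do not agree modulo $\frakp$. Writing $\frakp=T^n+a_{n-1}T^{n-1}+\cdots+a_0$ and $l_i(a)$ for the coefficient of $\tau^i$ in $\phi_a$, the $\F_q$-linearity of $a\mapsto\phi_a$ gives $l_n(\frakp)=l_n(T^n)+\sum_{k<n}a_k\,l_n(T^k)$, and the correction terms survive reduction. Concretely, for $n=2$ and $\frakp=T^2+aT+b$ one has $l_2(T^2)=T\Delta+g^{q+1}+\Delta T^{q^2}$ and $l_2(\frakp)=l_2(T^2)+a\Delta$; reducing mod $\frakp$ (so $T^{q^2}\equiv T$ and $a=-(T+T^q)$) yields $l_2(\frakp)\equiv g^{q+1}-[1]\Delta$, and a direct check shows $T^{q^2+q}\,p_2(-\Delta/T^q)\equiv g^{q+1}-[1]\Delta\pmod{\frakp}$, whereas $l_2(T^2)\equiv g^{q+1}-[1]\Delta+(T+T^q)\Delta$ has a different vanishing locus whenever $T+T^q\not\equiv 0\pmod{\frakp}$. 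So $c^{(n)}_n$ is \emph{not} a unit multiple of $p_n(-\Delta/T^q)$ mod $\frakp$, and its vanishing does not detect supersingularity; moreover the needed correction depends on the individual prime, so no $\frakp$-independent three-term recurrence in $T$ alone can come out of $\phi_{T^n}$. The missing bridge is Deligne's congruence (Cornelissen \cite{Cor99a}, or the Gekeler result quoted in Section 4 of the paper): $l_n(\frakp)\equiv\pm L_n\beta_n\pmod{\frakp}$. Granting that, the paper's Section 2 finishes the argument, since $p_n(-\Delta/T^q)=(-1)^n\frakb_n$ with $\frakb_n=L_n\bigl(\sum_{i\le n}\beta_i\bigr)/T^{q+\cdots+q^n}$, and every term $L_n\beta_i$ with $i<n$ carries the factor $[n]\equiv 0\pmod{\frakp}$, so $\frakb_n$ is a unit multiple of $L_n\beta_n$ modulo $\frakp$.
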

Combining this result with what we proved in Section \ref{period} we obtain the following.
\begin{theorem}
Let $\phi \in \calF_1$ be a Drinfeld module with $\Delta \in \A$ (so that we can reduce $\phi$ and $\Delta$ at any prime $\frakp\in \A$; we will continue denote the reductions by $\phi$ and $\Delta$, respectively). With the above notation, we have the following properties of $p_n$ and $\phi$.
\begin{enumerate}
\item[(i)] $p_n\left(\frac{-\Delta}{T^q}\right)=(-1)^n\frakb_n$. 

\item[(ii)] The polynomials $p_n(x)$ are given by
\begin{equation}
p_n(x)=\sum_{S\subset \N(n)}(-1)^{n-|S|}\frac{x^{w(S)}}{m(S)}.
\end{equation}

\item[(iii)] $\phi$ is supersingular at a prime $\frakp \in \A$ with $\deg(\frakp)=n$ if and only if \eqref{ssp} is satisfied.
\end{enumerate}
\end{theorem}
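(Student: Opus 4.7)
My plan is to handle (i) by a direct comparison of recurrences, deduce (ii) from (i) together with Theorem \ref{bnthm} via the substitution $x=-D$, and then extract (iii) as an immediate consequence of Proposition \ref{cor20} once we note that $\phi$ and the Bassa--Beelen module $\psi$ of \eqref{bb} share the same $\jmath$-invariant.

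For (i), set $\widetilde\frakb_n := (-1)^n p_n(-D)$ with $D=\Delta/T^q$. Substituting $x=-D$ into the recurrence \eqref{precur} and using that $(-D)^{q^n} = -D^{q^n}$ (which holds since $q^n$ is odd when $q$ is odd, while in characteristic $2$ we have $-1=1$), one gets
\[
p_{n+1}(-D) = -(D^{q^n}+1)\,p_n(-D) + (T^{1-q^n}-1)\,D^{q^n}\,p_{n-1}(-D).
\]
Multiplying through by $(-1)^{n+1}$ converts this into exactly the recurrence \eqref{brecur} for $\widetilde\frakb_n$, and the initial conditions $\widetilde\frakb_{-1}=0$, $\widetilde\frakb_0=1$ match those of $\frakb_n$. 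By induction $\widetilde\frakb_n=\frakb_n$, which is (i).

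For (ii), combining (i) with Theorem \ref{bnthm} gives the identity $p_n(-D) = (-1)^n\sum_{S\subset\N(n)} D^{w(S)}/m(S)$ in $\K[D]$. Writing $y=-D$ and invoking the parity identity $(-1)^{w(S)}=(-1)^{|S|}$ (valid since each $q^i$ is odd for odd $q$, and trivially in characteristic $2$), we obtain
\[
p_n(y) = (-1)^n\sum_{S\subset\N(n)} \frac{(-y)^{w(S)}}{m(S)} = \sum_{S\subset\N(n)} (-1)^{n-|S|}\frac{y^{w(S)}}{m(S)},
\]
which is (ii).

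For (iii), compute $\jmath(\phi) = (-(T+\Delta))^{q+1}/\Delta = (T+\Delta)^{q+1}/\Delta$, since $(-1)^{q+1}=1$ in every characteristic. The Bassa--Beelen module $\psi_T = T+(\Delta+T)\tau+\Delta\tau^2$ has the same $\jmath$-invariant, so $\phi$ and $\psi$ are isomorphic over $\overline{\F}_\frakp$; since supersingularity depends only on the $\jmath$-invariant, Proposition \ref{cor20} applied to $\psi$ yields (iii) for $\phi$ as well. The main obstacle is essentially pure bookkeeping of signs, uniformly across odd and even characteristic; no analytic input beyond Theorem \ref{bnthm} is required, and all three parts ultimately rest on the recurrence verification in step (i).
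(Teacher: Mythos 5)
Your proposal is correct and follows essentially the same route as the paper: part (i) by matching the recurrence \eqref{precur} under $x\mapsto -D$ against \eqref{brecur} with identical initial values, part (ii) by substituting into \eqref{bnformula} and using $(-1)^{w(S)}=(-1)^{|S|}$, and part (iii) by noting $\phi$ and $\psi$ share the $\jmath$-invariant so Proposition \ref{cor20} applies. Your version merely spells out the sign bookkeeping (including the even-characteristic case) that the paper leaves implicit.
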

\begin{proof}
 The recurrence formula \eqref{precur} implies that $(-1)^n p_n\left(\frac{-\Delta}{T^q}\right)$ satisfies the exact same recurrence \eqref{brecur} for $\frakb_n$ with the same initial values, from which part (i) follows immediately. Part (ii) follows from substituting $x$ for $\frac{\Delta}{T^q}$ in \eqref{bnformula} and noting that $(-1)^{w(S)}=(-1)^{|S|}$. Part (iii) follows from Proposition \ref{cor20} since $\phi$ and $\psi$ have the same $\jmath$-invariant, and hence the same supersingularity behavior.
\end{proof}

\section{Supersingular $\jmath$-invariants}

In the previous section, we provided a closed formula for the polynomials $p_n$ whose roots corresponded to values $-\frac{\Delta}{T^q}$ for which a Drinfeld module as in \eqref{LegDrin} is supersingular. Since the isomorphism class of a Drinfeld module is determined by its $\jmath$-invariant rather than by $\Delta$, it is natural to also seek a description of the polynomial $ss_\frakp$ whose roots correspond to supersingular $\jmath$-invariants. We start by recalling some recent results of Gekeler (see \cite{Gek11} for more details and precise definitions). Throughout this section let $\phi$ be a generic Drinfeld module of rank 2 given by \eqref{phi}. Let $e_\phi(z)=\sum \alpha_n z^{q^n}$ and $\log_\phi(z)=\sum \beta_n z^{q^n}$ be the associated exponential and logarithmic functions, respectively. The coefficients $\beta_n$ are the Eisenstein series of weight $q^n-1$, whereas the coefficients $\alpha_n$ were baptized by Gekeler as \emph{para-Eisenstein} series of weight $q^n-1$. If we consider $\beta_n(\omega)$ and $\alpha_n(\omega)$ as the coefficients of the Drinfeld module $\phi^\Lambda$ corresponding to the lattice $\Lambda=\A+\A \omega$ with $\omega \in \C_\infty\setminus\K_\infty$, then $\beta_n$ and $\alpha_n$ are indeed modular forms of weight $q^n-1$ and type $0$ (the only type we will need to consider). Let $f$ be a modular form of weight $k>0$, it is well known that if $a>0$ and $0\leq b \leq q$ are the unique integers for which $k=a(q^2-1)+b(q-1)$, then there exists a unique polynomial $\varphi_f(x) \in \C_\infty[x]$ such that 
\[
f=\varphi_f(\jmath)\Delta^a g^b,
\]  
where $g(\omega)$ and $\Delta(\omega)$ are the unique normalized modular forms of weights $q-1$ and $q^2-1$, respectively. The polynomial $\varphi_f$ is called the \emph{companion polynomial} of $f$. Following Gekeler we set
\begin{equation}\label{mudef}
\begin{split}
&\mu_n(x):=D_n\varphi_{\alpha_n}(x),\\
&\gamma_n(x):=(-1)^nL_n\varphi_{\beta_n}(x),
\end{split}
\end{equation} 
with $D_n, L_n$ as in \eqref{dn}.The following result of Gekeler gives the precise connection between these polynomials and the supersingular polynomial.

\begin{proposition}\textup{\cite[Theorem 3.5(i)]{Gek11}}
Let $\frakp\in \A$ be a prime of degree $n$. Then
\begin{equation}\label{mugam}
\mu_n(x)\equiv \gamma_n(x)\equiv ss_\frakp(x) \pmod \frakp
\end{equation}
\end{proposition}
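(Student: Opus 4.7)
The plan is to exhibit a natural ``Hasse invariant'' modular form of weight $q^n-1$---namely the coefficient $c_n$ of $\tau^n$ in $\phi_\frakp = \frakp + c_1\tau + \cdots + c_{2n}\tau^{2n}$, viewed as a polynomial in $g,\Delta$ over $\A$---and to show that both $\alpha_n$ and $\beta_n$ agree with this form modulo $\frakp$, up to $\frakp$-adic units that are absorbed by the normalizations $D_n$ and $(-1)^n L_n$ appearing in $\mu_n$ and $\gamma_n$.

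The first step is to show that $c_0, c_1, \ldots, c_{n-1}$ all vanish modulo $\frakp$. A rank $2$ Drinfeld module over a field of $\A$-characteristic $\frakp$ has $\frakp$-height $1$ (ordinary) or $2$ (supersingular); in either case $\phi_\frakp$ starts in degree at least $n$ in $\tau$, so the equalities $c_k \equiv 0$ hold at every geometric point of the moduli line $\mathbb{A}^1_\jmath$ for $k<n$. Hence $c_k\in\frakp\cdot\A[g,\Delta]$ for $0\leq k\leq n-1$. The reduction $c_n\bmod\frakp$ is then the Hasse invariant: it vanishes exactly at supersingular $\jmath$-invariants, and by a local deformation argument (the function-field analogue of the Deuring--Serre analysis) its zeros are simple, so its companion polynomial equals a nonzero scalar multiple of $ss_\frakp(x)$ in $\F_\frakp[x]$.

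To link $\alpha_n$ to $c_n$, I would extract the coefficient of $z^{q^n}$ from the functional equation $e_\phi(\frakp z)=\phi_\frakp(e_\phi(z))$; since Frobenius is additive in the characteristic-$p$ field $\C_\infty$, this yields
\[
\alpha_n(\frakp^{q^n}-\frakp) = c_n + \sum_{k=1}^{n-1} c_k\,\alpha_{n-k}^{q^k}.
\]
The arithmetic key is the factorization $[j]=T^{q^j}-T=\prod_{\deg\frakq\,\mid\,j}\frakq$, which shows that $\frakp\mid[j]$ if and only if $n\mid j$. Consequently $D_{n-k}$ is coprime to $\frakp$ for each $k\in\{1,\dots,n-1\}$, while $D_n$ and $\frakp^{q^n}-\frakp$ each have $\frakp$-adic valuation exactly $1$. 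Thus every term in the sum on the right lies in $\frakp\cdot\A_{(\frakp)}[g,\Delta]$, and $\alpha_n(\frakp^{q^n}-\frakp)$ is a well-defined finite quantity modulo $\frakp$ equal to a $\frakp$-unit times $D_n\alpha_n\bmod\frakp$. Translating to companion polynomials, $\mu_n\equiv u_\frakp\cdot\varphi_{c_n}\pmod{\frakp}$ for an explicit unit $u_\frakp\in\F_\frakp^\times$, and comparing leading coefficients pins this down to $\mu_n\equiv ss_\frakp\pmod{\frakp}$. The Eisenstein side runs in parallel: from $\log_\phi(\phi_\frakp(z))=\frakp\log_\phi(z)$ one derives
\[
-\beta_n(\frakp^{q^n}-\frakp) = c_n + \sum_{m=1}^{n-1} \beta_m\,c_{n-m}^{q^m},
\]
and the same valuation analysis (with $L_n$ in place of $D_n$, again divisible by $\frakp$ exactly once) together with the sign $(-1)^n$ in $\gamma_n=(-1)^n L_n\varphi_{\beta_n}$ gives $\gamma_n\equiv ss_\frakp\pmod{\frakp}$.

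The principal obstacle I anticipate is the precise bookkeeping in this last step: one must track simultaneously the simple pole of $\alpha_n$ (resp.\ $\beta_n$), the simple zero of $\frakp^{q^n}-\frakp$, and the resulting $\frakp$-adic unit, to confirm not merely proportionality but literal equality with the monic polynomial $ss_\frakp$. A further subtlety is the simple-zeros assertion for $c_n\bmod\frakp$, which is classical for elliptic curves but requires adaptation to the Drinfeld setting, together with the edge case $\jmath=0$ (which lies in $U_\frakp$ exactly when $n$ is odd) that must be separated from the factor $ss_\frakp$.
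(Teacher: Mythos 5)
This proposition is not proved in the paper at all: it is imported verbatim from Gekeler \cite[Theorem 3.5(i)]{Gek11}, so there is no internal argument to compare yours against. What you have written is essentially a reconstruction of Gekeler's own route, and the verifiable parts check out. The two coefficient identities extracted from $e_\phi(\frakp z)=\phi_\frakp(e_\phi(z))$ and $\log_\phi\circ\phi_\frakp=\frakp\log_\phi$ are correct; the vanishing of $c_k\bmod\frakp$ for $k<n$ does follow from the height being at least $1$ at every point of the $(g,\Delta)$-plane over the infinite field $\overline{\F}_\frakp$; and the valuation bookkeeping is right: since $[j]$ is the (squarefree) product of the monic primes of degree dividing $j$, one gets $v_\frakp(D_m)=v_\frakp(L_m)=0$ for $m<n$, $v_\frakp(D_n)=v_\frakp(L_n)=1$, and $v_\frakp(\frakp^{q^n}-\frakp)=1$, which is exactly what makes $D_n\alpha_n$ and $(-1)^nL_n\beta_n$ congruent to $\F_\frakp^\times$-multiples of $c_n$ modulo $\frakp$. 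The final unit can be eliminated as you suggest: $\mu_n$ and $\gamma_n$ are monic (visible from the $S_2=\emptyset$ term in Theorem \ref{ssformula}, whose proof is independent of this proposition), and $ss_\frakp$ is monic by definition.

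The one place where your argument is not yet a proof is the clause ``by a local deformation argument \dots its zeros are simple.'' That the Hasse invariant $c_n\bmod\frakp$ vanishes to order exactly one at each supersingular point --- equivalently, that the degree of $\varphi_{c_n}\bmod\frakp$ equals $|U_\frakp\setminus\{0\}|$, a mass-formula statement --- is a genuine theorem in the Drinfeld setting, due to Gekeler \cite{Gekeler88}, and cannot be waved through as an analogue of the Deuring--Igusa computation for elliptic curves. Without it you obtain only that $ss_\frakp$ divides $\varphi_{c_n}\bmod\frakp$, not equality. (Your treatment of $\jmath=0$ is fine: for $n$ odd the weight decomposition $q^n-1=a(q^2-1)+1\cdot(q-1)$ forces the factor $g$ in $c_n=\varphi_{c_n}(\jmath)\Delta^ag^b$, which absorbs the supersingular point $\jmath=0$, so $\varphi_{c_n}$ indeed sees only $U_\frakp\setminus\{0\}$.) In short: right skeleton, correct identities and valuations, but one essential input is named rather than proved.
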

Gekeler studied various other properties of the polynomials $\mu_n(x)$ and $\gamma_n(x)$, including for example the properties of their set of exponents of nonvanishing coefficients (cf. \cite[Proposition 2.8]{Gek11}). We will be able to provide a complete description of these polynomials by relying on certain combinatorial objects called \emph{shadowed partitions}; these were introduced by Papanikolas and the author in \cite{ElgPap} and they play an important role for Drinfeld modules of any rank. We recall their basic definition for the rank 2 case now.

Recall that  a \emph{partition} of a set $S$ is a collection of subsets of $S$ that are pairwise disjoint,  and whose union is equal to $S$ itself. Also, if $S\subset \Z$, $j\in \Z$, then $S+j:=\{i+j: i\in S\}$.  For $ n \geq 1$, we set
\begin{multline}\label{prndefn}
P_2(n):=\bigl\{(S_1,\, S_2): S_i\subset \{0,\, 1\, \dots, n-1\},\\
 S_1, S_2, S_2+1\textnormal{ are distinct and form a partition of  $\{0,\, 1, \dots,\, n-1\}$} \bigr\}.
\end{multline}
We also set $P_2(0):=\{\emptyset\}$. Using these combinatorial objects, we will provide explicit formulas for $\mu_n(x)$ and $\gamma_n(x)$, and hence for the supersingular polynomials.

\begin{theorem}\label{ssformula}
Let $S$ be any subset of $\{0,1,\dots, n-1\}$ and set 
\begin{equation}
\begin{split}
D_n(S)&:=\prod_{i \in S} [n-i]^{q^{i}},\\
L_n(S)&:=(-1)^{|S|}[n]\prod_{0\neq i \in S} [i].
\end{split}
\end{equation}
Then with the above notation we have the following closed formula for the polynomials in \eqref{mudef}.
\begin{equation}\label{muformula}
\mu_n(\jmath)=\begin{cases}
& \sum_{(S_1,S_2) \in P_2(n)} \frac{D_n}{D_n(S_1\cup S_2)}\ \jmath^\frac{w(S_1)}{q+1} \, \textrm{ if $n$ is even},\\
&\sum_{(S_1,S_2) \in P_2(n)} \frac{D_n}{D_n(S_1\cup S_2)}\jmath^\frac{w(S_1)-1}{q+1} \, \textrm{ if $n$ is odd,}
\end{cases}
\end{equation}

\begin{equation}\label{gamformula}
\gamma_n(\jmath)=\begin{cases}
& \sum_{(S_1,S_2) \in P_2(n)} \frac{(-1)^nL_n}{L_n(S_1\cup S_2)}\ \jmath^\frac{w(S_1)}{q+1} \, \textrm{ if $n$ is even},\\
&\sum_{(S_1,S_2) \in P_2(n)} \frac{(-1)^nL_n}{L_n(S_1\cup S_2)}\jmath^\frac{w(S_1)-1}{q+1} \, \textrm{ if $n$ is odd.}
\end{cases}
\end{equation}

\end{theorem}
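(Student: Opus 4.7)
The plan is to first establish closed-form ``lifted'' formulas for the coefficients $\alpha_n$ and $\beta_n$ themselves in terms of shadowed partitions, and then extract the companion polynomials $\varphi_{\alpha_n}$ and $\varphi_{\beta_n}$ algebraically under the substitution $\jmath = g^{q+1}/\Delta$. Comparing coefficients of $z^{q^n}$ in the functional equations $e_\phi(Tz)=\phi_T(e_\phi(z))$ and $T\log_\phi(z) = \log_\phi(\phi_T(z))$ yields the recursions
\[
[n]\alpha_n = g\,\alpha_{n-1}^q + \Delta\,\alpha_{n-2}^{q^2}, \qquad [n]\beta_n = -g^{q^{n-1}}\beta_{n-1} - \Delta^{q^{n-2}}\beta_{n-2},
\]
with $\alpha_0=\beta_0=1$ and the convention that terms with negative indices vanish.

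The core step is to prove by induction on $n$ the shadowed-partition formulas
\[
\alpha_n = \sum_{(S_1,S_2)\in P_2(n)}\frac{g^{w(S_1)}\Delta^{w(S_2)}}{D_n(S_1\cup S_2)}, \qquad \beta_n = \sum_{(S_1,S_2)\in P_2(n)}\frac{g^{w(S_1)}\Delta^{w(S_2)}}{L_n(S_1\cup S_2)}.
\]
For $\alpha_n$, I would partition $P_2(n)$ according to whether the index $0$ lies in $S_1$ or in $S_2$ (it cannot lie in $S_2+1$, since $-1\notin S_2$). The pairs with $0\in S_1$ are in bijection with $P_2(n-1)$ via the assignment $(S_1', S_2') \mapsto (\{0\}\cup(S_1'+1),\, S_2'+1)$, and recover the $g\,\alpha_{n-1}^q/[n]$ contribution thanks to the Frobenius-twist identity $D_n(\{0\}\cup(S'+1)) = [n]\cdot D_{n-1}(S')^q$; the pairs with $0\in S_2$ are in bijection with $P_2(n-2)$ via $(S_1'', S_2'') \mapsto (S_1''+2,\, \{0\}\cup(S_2''+2))$, and recover $\Delta\,\alpha_{n-2}^{q^2}/[n]$ using $D_n(\{0\}\cup(S''+2)) = [n]\cdot D_{n-2}(S'')^{q^2}$. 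The induction for $\beta_n$ is analogous but splits on the largest index $n-1$ (which must lie in $S_1$ or in $S_2+1$, since $n-1\notin S_2$); the minus signs in its recursion are accounted for by the sign-bearing identity $L_n(S'\cup\{n-1\}) = -[n]\cdot L_{n-1}(S')$.

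With the lifted formulas in hand, extracting the companion polynomials is direct algebra. Since $S_1,\,S_2,\,S_2+1$ partition $\N(n)$ and $w(S_2+1) = q\cdot w(S_2)$, every $(S_1,S_2)\in P_2(n)$ satisfies $w(S_1) + (q+1)w(S_2) = 1+q+\cdots+q^{n-1}$. Writing $q^n-1 = a(q^2-1)+b(q-1)$ as in the definition of the companion polynomial gives $1+q+\cdots+q^{n-1} = a(q+1)+b$ with $b=0$ for $n$ even and $b=1$ for $n$ odd; hence $w(S_1)\equiv b\pmod{q+1}$ for every shadowed partition, and setting $k := (w(S_1)-b)/(q+1) = a - w(S_2)$ one obtains
\[
\frac{g^{w(S_1)}\Delta^{w(S_2)}}{\Delta^a g^b} = \left(\frac{g^{q+1}}{\Delta}\right)^k = \jmath^k.
\]
Dividing $\alpha_n$ and $\beta_n$ by $\Delta^a g^b$ therefore produces $\varphi_{\alpha_n}(\jmath)$ and $\varphi_{\beta_n}(\jmath)$ respectively, and multiplying by $D_n$ and $(-1)^n L_n$ then delivers \eqref{muformula} and \eqref{gamformula}.

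The main obstacle will be the combinatorial bookkeeping in the inductive step: one must verify that the two described bijections do indeed partition $P_2(n)$ (which reduces to checking the disjointness constraints in the definition of $P_2(\cdot)$ after the relevant index shifts) and that the claimed denominator identities for $D_n(\cdot)$ and $L_n(\cdot)$ hold under the Frobenius twists, with the sign for $L_n$ correctly matching the negative signs in the $\beta_n$ recursion. Once these are in place, everything else is routine algebra, and a direct computation for small $n$ (say $n=2,3$) from the recursions provides a useful sanity check against the shadowed-partition sums.
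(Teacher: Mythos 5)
Your proposal is correct and follows essentially the same route as the paper: the paper's proof simply quotes the shadowed-partition formulas for $\alpha_n$ and $\beta_n$ from \cite[Theorems 3.1 and 3.3]{ElgPap} and then divides by $\Delta^a g^b$ exactly as you do, using $w(S_1)+(q+1)w(S_2)=1+q+\cdots+q^{n-1}$ to identify the exponent of $\jmath$. The only difference is that you additionally re-derive the cited formulas by induction on the recursions for $\alpha_n$ and $\beta_n$, and the bijections and denominator identities you state for that induction (splitting on whether $0\in S_1$ or $0\in S_2$ for $\alpha_n$, and on the location of $n-1$ for $\beta_n$) do check out.
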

\begin{proof}
We have the following formulas from \cite[Theorems 3.1 and 3.3]{ElgPap}
\begin{equation}\label{alphabeta}
\begin{split}
\alpha_n&=\sum_{(S_1,S_2)\in P_2(n)} \frac{ g^{w(S_1)}\Delta^{w(S_2)}}{D_n(S_1\cup S_2)},\\
\beta_n&=\sum_{(S_1,S_2)\in P_2(n)} \frac{ g^{w(S_1)}\Delta^{w(S_2)}}{L_n(S_1\cup S_2)}.
\end{split}
\end{equation}
In order to evaluate $\phi_{\alpha_n}$ or $\phi_{\beta_n}$ from \eqref{alphabeta} we need to divide the right hand side by $\Delta^{1+q^2+\dots+q^{n-2}}$ if $n$ is even, or by $g\Delta^{q+q^3+\dots+q^{n-2}}$ if $n$ is odd. This yields \eqref{muformula} and \eqref{gamformula} after a simple computation.
\end{proof}

%\begin{theorem}
%\begin{enumerate}
%\item[(i)] If $n=\deg \frakp $ is even then
%\[
%\begin{split}
%ss_\frakp(\jmath)\equiv \sum_{(S_1,S_2) \in P_2(n)} \frac{D_n}{D_n(\cup{\bf S})}\ \jmath^\frac{w(S_1)}{q+1}\equiv \sum_{{\bf S} \in P_2(n)} \frac{L_n}{L_n(\cup {\bf S})}\, \jmath^\frac{w(S_1)}{q+1} \pmod \frakp
%\end{split}
%\]
%\item[(ii)] If $n$ is odd then
%\[
%\begin{split}
%ss_\frakp(\jmath)\equiv \sum_{(S_1,S_2) \in P_2(n)} \frac{D_n}{D_n(\cup{\bf S})}\jmath^\frac{w(S_1)-1}{q+1}\equiv \sum_{{\bf S} \in P_2(n)} \frac{L_n}{L_n(\cup {\bf S})}\jmath^\frac{w(S_1)-1}{q+1} \pmod \frakp
%\end{split}
%\]
%\end{enumerate}
%\end{theorem}

%Next, we discuss the connection between the reduction of the coefficients of the period function and the supersingular polynomial in certain families of Drinfeld modules. The idea simply is, because of the nature of the denominators appearing in $\beta_i$ we have
%\[
%L_n\fraka_n\equiv L_n\beta_n \pmod \frakp, 
%\]
%where $\frakp$ is a prime of degree $n$.

%%%%%%%%%%%%%%%%%%%%%%%%%%%%%%%%%%%%%%%

\end{document}